\title{Differentiable maps with isolated critical points are not necessarily open in infinite dimensional spaces}
\author[*]{Chunrong Feng}
\author[$\dag$]{Liangpan Li}
	\affil[*]{Department of Mathematical Sciences, Durham
		University, DH1 3LE, UK}
	\affil[$\dag$]{School of Mathematics, Shandong University, Jinan, Shandong, 250100, China}	
	\affil[ ]{chunrong.feng@durham.ac.uk, liliangpan@gmail.com}
\date{}
\newtheorem{thm}{Theorem}[section]
\newtheorem{defi}[thm]{Definition}
\newtheorem{rmk}[thm]{Remark}
\newtheorem{exmp}[thm]{Example}
\newtheorem{question}[thm]{Question}
\numberwithin{equation}{section}
\begin{document}

\maketitle

\begin{abstract}

Jean Saint Raymond asked whether continuously differentiable maps with isolated critical points are necessarily
open in infinite dimensional (Hilbert)
spaces. We answer this question negatively by constructing counterexamples in various settings.

\noindent
{\bf Keywords: Differentiable map, critical point, open map, Banach space}
\medskip

\noindent
{\bf Mathematics Subject Classifications (2020):} Primary 46G05; Secondary 46T20
\end{abstract}

\pagestyle{fancy}
\fancyhf{}
\fancyhead[LE,RO]{\thepage}
\fancyhead[LO]{\small{Differentiable maps with isolated critical points}}
\fancyhead[RE]{\small{C. Feng and  L. Li}}

\section{Introduction}

It is well known \cite{Bollobas,Coleman,Nirenberg} that $C^1$ (continuously differentiable) maps without critical points between Banach spaces are open.
Saint Raymond \cite{Saint} asked whether such phenomenon still occurs if the given maps  are relaxed to having isolated critical points
in  infinite dimensional (Hilbert) spaces. The purpose of the paper is to answer this question negatively by constructing
counterexamples in various real Banach spaces including all separable ones.

Back to finite dimensional spaces, Saint Raymond proved \cite{Saint} that $C^1$ vector fields with countably many critical points
are open provided that the dimension of the ambient space is higher than 1. This result was rediscovered  by the second-listed author \cite{Li}
and
is implicitly implied by Theorem 1 or 2 in \cite{Titus} by Titus and Young.
For the interest of readers, we refer to \cite{Church62BullAMS,Church62,Church73,Da,Loeb,Gamboa,Hirsch,Hu,Krantz,Ra,Ruzhansky,SRaymond,Tao11}
for some relevant works in Euclidean spaces.

Throughout the paper Banach spaces are assumed to be over the field $\mathbb{R}$ of real numbers.

Our general idea is as follows.
Let $(X,\|\cdot\|)$ be a real Banach space, and consider maps of the form
\begin{equation}\label{Form}F: x\in X\mapsto \exp(-\frac{1}{|||x|||^{s}})\cdot x\in X,\end{equation}
where $|||\cdot|||$ is another norm on $X$ that is strictly weaker than $\|\cdot\|$, i.e. there exists a constant $C>0$ such that $|||\cdot |||\leq C||\cdot||$ but both norms are not equivalent. Here $s>0$ is some real number,
and we use the convention that ${1\over 0}=\infty$ and $\exp(-\infty)=0$.
It is geometrically evident that  the unique fixed point $x=0$ of $F$ is not an interior point of
the image of the unit open ball under $F$, thus $F$ is not an open map.
After specifying the quadruple $(X,\|\cdot\|,|||\cdot|||,s)$ in several ways later on,
we will always show that $F$ is a $C^1$ map with a unique critical point at the origin.
Hence  Saint Raymond's question is answered negatively.

\section{Differentiable open maps}\label{section2.1}

\begin{defi}\label{differentiable}
A  map $F: X\rightarrow Y$ between two Banach spaces  is said to be  (Fr\'{e}chet) differentiable at a point $x\in X$
if there exists a (unique) bounded linear operator $J_F(x)$ from $X$ into $Y$,  called the derivative of $F$ at $x$, such that
\[F(x+h)-F(x)=J_F(x)h+o(||h||)\ \ \  (h\to 0).\]
Furthermore, $x$ is  said to be a regular (critical) point of $F$ if $J_F(x)$ has (has not) an inverse in $B(Y,X)$, the space of bounded linear operators from $Y$
to $X$; and
  $F$ is said to be $C^1$ if it is differentiable everywhere and
  $x \mapsto J_F(x)$ is a continuous map from $X$ to $B(X,Y)$. A map between two topological spaces is said to be open if it maps open sets to open sets.
\end{defi}

We now make an initial study of maps of the form (\ref{Form}). Take a positive constant $C$ such that $|||x|||\leq C\|x\|$
for all $x\in X$.
For any $h\in X\backslash\{0\}$,
\[\frac{\|F(h)-F(0)\|}{\|h\|}=\frac{\|F(h)\|}{\|h\|}=\exp(-\frac{1}{|||h|||^{s}})\leq \exp(-\frac{1}{C^s\|h\|^{s}}),\]
where the last term goes to zero as $h\rightarrow0$ in $(X,\|\cdot\|))$. According to Definition \ref{differentiable}, $F$ is differentiable at the origin
with derivative $J_F(0)=0$,
the zero element of $B(X):=B(X,X)$. Thus $x=0$ is a critical point of $F$.

\begin{thm}\label{thm22}
The map $F$ defined by (\ref{Form}) is not open.
\end{thm}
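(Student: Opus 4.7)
The plan is to show that the image of the $\|\cdot\|$-open unit ball $B=\{x\in X:\|x\|<1\}$ under $F$ fails to contain any $\|\cdot\|$-neighborhood of $F(0)=0$. Since any open map in particular sends $B$ to an open set containing $0$, this alone forces $F$ not to be open.

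First I would exploit the non-equivalence of the two norms. Because $|||\cdot|||\leq C\|\cdot\|$ but the norms are not equivalent, there exists a sequence $(x_n)\subset X$ with $\|x_n\|=1$ and $|||x_n|||\to 0$; pass to such a sequence. Next I would use the key structural feature of $F$: the formula $F(x)=\exp(-1/|||x|||^{s})\cdot x$ writes $F(x)$ as a non-negative scalar multiple of $x$, so for each $s>0$, the preimage $F^{-1}(\{sx_n\})$ lies entirely on the positive ray $\{tx_n:t>0\}$. Writing $F(tx_n)=g_n(t)x_n$ with
\[
g_n(t)=t\exp\!\Bigl(-\tfrac{1}{t^{s}|||x_n|||^{s}}\Bigr),\qquad t>0,
\]
a direct differentiation of $\log g_n$ shows $g_n$ is strictly increasing on $(0,\infty)$. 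Because $\|x_n\|=1$, the portion of the ray $\{tx_n\}$ that lies in $B$ corresponds exactly to $t\in(0,1)$; consequently the image of this portion under $F$ is $\{g_n(t)x_n:t\in(0,1)\}\subset\{sx_n:0\le s<\exp(-1/|||x_n|||^{s})\}$.

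To conclude, I would choose $y_n=2\exp(-1/|||x_n|||^{s})\,x_n$. By the calculation above, $y_n\notin F(B)$. Yet $\|y_n\|=2\exp(-1/|||x_n|||^{s})\to 0$ as $n\to\infty$ because $|||x_n|||\to 0$. Therefore no $\|\cdot\|$-ball around $0$ is contained in $F(B)$, establishing the theorem.

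I do not anticipate a substantial obstacle: the monotonicity of $g_n$ is a one-line derivative check, and the extraction of the sequence $(x_n)$ is a textbook consequence of non-equivalence. The only step warranting minor care is the observation that preimages of points on the $x_n$-ray are confined to that same ray, which relies decisively on the fact that $\exp(-1/|||\cdot|||^{s})\geq 0$ so that $F$ preserves one-dimensional rays rather than merely one-dimensional lines.
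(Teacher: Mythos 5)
Your proof is correct and rests on exactly the same two observations as the paper's: $F$ maps each ray $\{ty:t>0\}$ into itself, so preimages of nonzero points are confined to rays, and on the portion of such a ray inside the unit ball the image has norm strictly less than $\exp(-1/|||y|||^{s})$. The paper packages this as a proof by contradiction (openness would force $|||y|||$ to be bounded below on the unit sphere, contradicting strict weakness of $|||\cdot|||$), whereas you run the contrapositive directly by exhibiting points $y_n\to 0$ outside $F(B)$; this is only a cosmetic difference, though you should avoid reusing the letter $s$ both for the fixed exponent in (\ref{Form}) and for the ray parameter in $F^{-1}(\{sx_n\})$.
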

\begin{proof} If this was not true, then by considering $F(0)=0$, there exists a $\delta\in(0,1)$
such that for any given $y\in X$ with $\|y\|=1$, one can find an element $x_y$
in the unit open ball in $(X,\|\cdot\|)$ such that \[\frac{\delta}{2}y=F(x_y)=\exp(-\frac{1}{|||x_y|||^{s}})\cdot x_y.\]
Obviously, $x_y$ must be of the form $x_y=r_yy$ for some $r_y\in(0,1)$. Consequently,
\[\frac{\delta}{2}=\exp(-\frac{1}{r_y^s|||y|||^{s}})\cdot r_y\leq\exp(-\frac{1}{|||y|||^{s}}),\]
which implies that
\[|||y|||\geq\Big(\frac{1}{\ln\frac{2}{\delta}}\Big)^{\frac{1}{s}}.\]
Therefore,  $|||\cdot|||$ is stronger than  $\|\cdot\|$ on $X$. It is assumed that $|||\cdot|||$ is strictly weaker than  $\|\cdot\|$,
 so a contradiction is derived.
This finishes the proof of Theorem \ref{thm22}.
\end{proof}

Obviously, the map $F$ defined by (\ref{Form}) is a bijective map on $X$. The proof of Theorem \ref{thm22}
implies that the inverse $F^{-1}$ of $F$ is not continuous at $F(0)=0$ of $X$. We now can make a slightly stronger estimate
as follows. Since $|||\cdot|||$ is strictly weaker than $\|\cdot\|$, one can find elements $y_n\in X$
for large enough $n\in\mathbb{N}$, such that $\|y_n\|=1$ and $|||y_n|||=\frac{1}{n}$.
Define $z_n=\frac{y_n}{n}$, and let $x_n\in X$ be such that $F(x_n)=z_n$. Obviously, $x_n$ must be of the form $x_n=\gamma_ny_n$
for some $\gamma_n>0$.
Hence
\[\exp\Big(-\frac{n^s}{\gamma_n^s}\Big)\cdot\gamma_n=\|F(x_n)\|=\|z_n\|=\frac{1}{n},\]
or equivalently $n\gamma_n=\exp\Big(\frac{n^s}{\gamma_n^s}\Big)$, from which one can easily deduce that $\gamma_n\geq\sqrt{n}$
for sufficiently large $n$. To conclude, we see that $\|z_n\|\rightarrow0$ while $\|F^{-1}(z_n)\|\rightarrow\infty$ as $n$ goes to infinity.

\section{First example}\label{section3}

Consider  \[l^2=\Big\{x=(x_1,x_2,\cdots)\in\mathbb{R}^\mathbb{N}: \sum\limits_{k=1}^\infty x_k^2<\infty\Big\}\]  with standard norm $||x||=\Big(\sum\limits_{k=1}^\infty x_k^2\Big)^{1\over 2}$. Take $|||x|||=\Big(\sum\limits_{k=1}^\infty {{x_k^2} \over {k}}\Big)^{1\over 2}$,
another norm on $l^2$, which is strictly weaker than $\|\cdot\|$.

\begin{thm}\label{thm31}
 $F(x)=\exp(-\frac{1}{|||x|||^{2}})\cdot x$
 is a $C^1$ map on $(l^2,\|\cdot\|)$ with a unique critical point at the origin.
 \end{thm}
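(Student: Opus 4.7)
The plan is to exploit the fact that the weaker norm comes from a bounded quadratic form on $l^2$: writing $T\in B(l^2)$ for the diagonal operator sending $e_k$ to $e_k/k$ (operator norm $1$, self-adjoint, compact), one has $|||x|||^2=\langle Tx,x\rangle$. This makes $|||\cdot|||^2$ smooth on all of $l^2$ and $F$ visibly $C^1$ away from the origin, so the remaining tasks are (i) writing down $J_F(x)$ for $x\neq 0$, (ii) proving invertibility there, and (iii) verifying that $\|J_F(x)\|_{\mathrm{op}}\to 0$ as $\|x\|\to 0$ (since Section~\ref{section2.1} already gives $J_F(0)=0$).

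For (i), applying the product and chain rules to $F(x)=g(x)\,x$, with $g(x)=\exp(-1/|||x|||^2)$, yields
\[J_F(x)=g(x)\bigl(I+x\otimes v(x)\bigr),\qquad v(x)=\frac{2Tx}{|||x|||^4},\]
a scalar multiple of a rank-one perturbation of the identity. For (ii), such a perturbation is invertible if and only if $1+\langle v(x),x\rangle\neq 0$; and one computes $\langle v(x),x\rangle=2\langle Tx,x\rangle/|||x|||^4=2/|||x|||^2>0$. Together with the analysis of Section~\ref{section2.1}, this identifies $0$ as the unique critical point.

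The hard part will be (iii). The obvious estimate $\|J_F(x)\|_{\mathrm{op}}\leq g(x)\bigl(1+2\|x\|^2/|||x|||^4\bigr)$ is at first sight useless, because $|||x|||$ can decay arbitrarily faster than $\|x\|$ (for instance $x_n:=e_n/\sqrt{n}$ satisfies $\|x_n\|=n^{-1/2}$ and $|||x_n|||=n^{-1}$), so the polynomial factor need not stay bounded. The rescue is the super-exponential decay of $g$: with $u=1/|||x|||^2$, the expression $g(x)/|||x|||^4$ equals $u^2 e^{-u}$, which is uniformly bounded on $(0,\infty)$ by some constant $M$. Hence $\|J_F(x)\|_{\mathrm{op}}\leq g(x)+2M\|x\|^2$, which tends to $0$ as $\|x\|\to 0$. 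This yields continuity of $J_F$ at the origin and completes the $C^1$ verification.
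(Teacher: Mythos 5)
Your proof is correct, but it takes a genuinely different route from the one the paper uses for this theorem. The paper proves differentiability at $x\neq 0$ by a hands-on truncation argument: it introduces the finite sections $x^N,h^N$, applies Taylor's theorem to the smooth scalar functions $f_N(t)=-1/|||x^N+th^N|||^2$, and controls $f_N,f_N',f_N''$ uniformly in $N$ to extract the derivative and the $O(\|h\|^2)$ remainder; invertibility of $J_F(x)$ is then obtained by explicitly solving $J_F(x)h=y$ with the ansatz $h=\tilde y+\gamma x$. You instead observe that $|||x|||^2=\langle Tx,x\rangle$ is a bounded quadratic form, hence $C^\infty$, so that $F$ is $C^1$ off the origin by the chain and product rules alone --- this is essentially the strategy the paper itself advertises in Remark \ref{remarkA} for the exponent $s=4$, which you adapt to $s=2$ by treating the origin separately. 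Your rank-one (Sherman--Morrison) criterion $1+\langle v(x),x\rangle=1+2/|||x|||^2\neq 0$ is an equivalent but tidier packaging of the paper's explicit formula (\ref{hhh}). The most valuable difference is your step (iii): the paper disposes of continuity of $x\mapsto J_F(x)$ with the phrase ``by composition rule,'' which is fine away from $0$ but leaves the delicate point --- continuity at the origin, where the factor $1/|||x|||^4$ blows up relative to $\|x\|$ --- implicit; your estimate $e^{-u}u^2\leq 4e^{-2}$ with $u=1/|||x|||^2$, giving $\|J_F(x)\|_{\mathrm{op}}\leq g(x)+2M\|x\|^2\to 0$, supplies exactly the missing quantitative justification. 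Both arguments are sound; yours is shorter and more self-contained on the one point the paper glosses over, while the paper's truncation method has the merit of not presupposing any Hilbert-space structure beyond the coordinate expression of $|||\cdot|||$.
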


 \begin{proof}
In Section \ref{section2.1} we have shown that $F$ is differentiable at the origin with derivative $J_F(0)=0$.
In the following we will first show that $F$ is also differentiable  on $l^2\backslash\{0\}$.
Let $x\in l^2\backslash\{0\}$ and $h\in l^2$ be fixed such that $||h||< {{|||x|||}\over  2} $. For any $N\in {\mathbb N}$, denote
\[x^N=(x_1,x_2,\ldots,x_N,0,0,\ldots), \ \ h^N=(h_1,h_2,\ldots,h_N,0,0,\ldots).\]
  We always let $N$ be large enough so that
$
  ||h^N||<{{|||x^N|||}\over 2}
 $ and $\frac{|||x|||}{2}<|||x^N|||$.
   For such $N$,
\[f_{N}: t\in[0,1]\mapsto {-1\over {|||x^N+th^N|||^2}}={-1\over {\sum\limits_{k=1}^N {{(x_k+th_k)^2}\over k}}}\in {\mathbb R}\]
is a smooth function, and
\[||f_N||_{L^\infty}:=\sup_{t\in [0,1]} |f_N(t)|=\sup_{t\in [0,1]} {1\over {|||x^N+th^N|||^2}}\leq {16\over {|||x|||^2}}\]
since for any $t\in[0,1]$,
$$|||x^N+th^N|||\geq |||x^N|||-|||h^N|||\geq |||x^N|||-||h^N||\geq { |||x^N|||\over 2}\geq { ||x||\over 4}.$$
Moreover, direct computation shows that
\begin{align*}
f'_N(t)&=f^2_N(t)\sum_{k=1}^N {{2(x_k+th_k)}\over k}h_k,\\
f^{''}_N(t)&=2f_N^3(t)\Big(\sum_{k=1}^N {{2(x_k+th_k)}\over k}h_k\Big)^2+f^2_N(t)\sum_{k=1}^N {{2h_k^2}\over k}.
\end{align*}
Hence
\begin{align*}
\|f'_N\|_{L^{\infty}}&\leq \frac{16^2}{|||x|||^4}\cdot 3\|x\|\cdot\|h\|,\\
\|f''_N\|_{L^{\infty}}&\leq (2\cdot\frac{16^3}{|||x|||^6}\cdot 9\|x\|^2+\frac{16^2}{|||x|||^4}\cdot2)\cdot \|h\|^2.
\end{align*}
More briefly, there exists a positive constant $M$ depending only on $x$ such that
\begin{equation}\Big\|\frac{d^i}{dt^i}f_N\Big\|_{L^{\infty}}\leq M\|h\|^i\end{equation}
for $i=0,1,2$. As a consequence,
\begin{equation}\label{Bound32}\Big\|\frac{d^i}{dt^i}e^{f_N}\Big\|_{L^{\infty}}\leq \widehat{M}\|h\|^i\end{equation}
for $i=0,1,2$, where $\widehat{M}$ is another positive constant depending only $M$.
Note that
\begin{align*}
F(x+h)-F(x)&=\exp \Big(-{{1}\over {|||x+h|||^2}}\Big) \cdot (x+h)-\exp \Big(-{{1}\over{|||x|||^2}}\Big) \cdot x\\
&=\lim_{N\to \infty} \Big(e^{f_N(1)}-e^{f_N(0)}\Big)\cdot x+ e^{f_N(1)}\cdot h\\
&=\lim_{N\to \infty} \Big(\frac{d}{dt}e^{f_N(t)}\Big|_{t=0}+{1\over 2}{d^2\over {dt^2}}e^{f_N(t)}\Big|_{t=\alpha_N}\Big)\cdot x +
\Big(e^{f_N(0)}+\frac{d}{dt}e^{f_N(t)}\Big|_{t=\beta_N}\Big)\cdot h,
\end{align*}
where the existence of $\alpha_N\in(0,1)$ and $\beta_N\in(0,1)$ is ensured by  Taylor's theorem in calculus.
Considering (\ref{Bound32}), one gets
\[\big\|F(x+h)-F(x)-\lim_{N\to \infty}\frac{d}{dt}e^{f_N(t)}\Big|_{t=0}\cdot x-\lim_{N\to \infty}e^{f_N(0)}\cdot h\big\|\leq \Big(\frac{\widehat{M}\|x\|}{2}+\widehat{M}\Big)\|h\|^2,\]
where
\begin{align*}
\lim_{N\to \infty}e^{f_N(0)}&=\exp(-{1\over{|||x|||^2}}),\\
\lim_{N\to \infty}\frac{d}{dt}e^{f_N(t)}\Big|_{t=0}&=\exp(-{1\over{|||x|||^2}})\cdot{1\over{|||x|||^4}}\cdot\sum_{k=1}^\infty {{2x_kh_k}\over k}.
\end{align*}
According to the Cauchy-Schwarz inequality,  the linear  map
\[h\in l^2\mapsto\exp(-{1\over{|||x|||^2}})\cdot{1\over{|||x|||^4}}\cdot\sum_{k=1}^\infty {{2x_kh_k}\over k}\cdot x\in l^2\]
 is a bounded  operator on $(l^2,\|\cdot\|)$, where now $h$ can represent an arbitrary  element of $l^2$.
By Definition \ref{differentiable}, $F$ is differentiable at $x$ with derivative given by
\begin{equation}J_F(x)h=\exp(-{1\over{|||x|||^2}})\cdot{1\over{|||x|||^4}}\cdot\sum_{k=1}^\infty {{2x_kh_k}\over k}\cdot x+\exp(-{1\over{|||x|||^2}})\cdot h. \end{equation}
It is easy to verify by composition rule that  $x\mapsto J_F(x)$ is a continuous map from $l^2$ to $B(l^2)$, so $F$ is $C^1$.

Assuming $x$ is an arbitrary non-zero element of $l^2$, we will prove next that $J_F(x)$ is  bijective on $l^2$.
For any $y\in l^2$, we want to find an $h\in l^2$ such that
 \[\exp(-{1\over{|||x|||^2}})\cdot{1\over{|||x|||^4}}\cdot\sum_{k=1}^\infty {{2x_kh_k}\over k}\cdot x+\exp(-{1\over{|||x|||^2}})\cdot h=y,\]
or  equivalently
 \begin{equation*}
 {1\over{|||x|||^4}}\cdot\sum_{k=1}^\infty {{2x_kh_k}\over k}\cdot x+ h=\exp({1\over{|||x|||^2}})\cdot y=:\tilde y.
 \end{equation*}
Thus $h$ must be of the form $h=\tilde y+\gamma x$ for some $\gamma\in\mathbb{R}$, from which we deduce
 \[ {1\over{|||x|||^4}}\cdot\sum_{k=1}^\infty {{2x_k(\tilde y_k+\gamma x_k)}\over k}\cdot x+\gamma x=0.\]
As $x\neq 0$, we get
\begin{equation*}
\gamma=-{{{1\over{|||x|||^4}}\cdot\sum\limits_{k=1}^\infty {{2x_k\tilde y_k}\over k}}\over {1+{2\over{|||x|||^2}}}}.
\end{equation*}
Hence there exists a unique solution
\begin{equation}\label{hhh}
h=\exp({1\over{|||x|||^2}})\cdot \Bigg[y-{{{1\over{|||x|||^4}}\cdot\sum\limits_{k=1}^\infty {{2x_ky_k}\over k}}\over {1+{2\over{|||x|||^2}}}}
\cdot x\Bigg].\end{equation}
In particular, if $y=0$ then the solution must be $h=0$.
This proves that $J_F(x)$ is both surjective and injective.
Finally, it follows from Banach's isomorphism theorem that
$J_F(x)$ has an inverse in $B(l^2)$. In other words, $x$ is a regular point of $F$. This finishes the proof.
 \end{proof}

According to Theorem \ref{thm22}, the map given by Theorem \ref{thm31} is our first counterexample to Saint Raymond's question.

\begin{rmk}\label{remarkJF} We remark that (\ref{hhh}) can also be written as
\[h=\exp({1\over{|||x|||^2}})\cdot \Bigg[y-{{{1\over{|||x|||^4}}\cdot J_{|||\cdot|||^2}(x)y}\over {1+{1\over{|||x|||^4}}}\cdot J_{|||\cdot|||^2}(x)x}
\cdot x\Bigg].\]
\end{rmk}

 \begin{rmk}\label{remarkA}
Following the notations in Theorem \ref{thm31}, we remark that
 \[ x\in l^2\mapsto \exp(-\frac{1}{|||x|||^{4}})\cdot x\in l^2\]
 is another counterexample to Saint Raymond's question. The continuous differentiability of this map
 is much easier to be deduced by composition rule since as  $t\mapsto \exp(-\frac{1}{t^2})$ is a smooth function on $\mathbb{R}$, it thus
 remains only to show that
$x\mapsto |||x|||^2$
 is a $C^1$ function on $l^2$. Similarly, considering $t\mapsto \exp(-\frac{1}{t})$ is smooth on $[0,\infty)$
 and after showing $|||\cdot|||^2$ is a non-negative $C^1$ function on $l^2$,
 we immediately get that $x\mapsto \exp(-\frac{1}{|||x|||^{2}})$ is also a $C^1$ function on $l^2$.
 \end{rmk}

\newpage

 \section{Second example}

 Consider \[l^p=\Big\{x=(x_1,x_2,\cdots)\in\mathbb{R}^\mathbb{N}: \sum\limits_{k=1}^\infty |x_k|^p<\infty\Big\}\ \ \ (1\leq p<\infty)\]  with
  standard norm $||x||_p=\Big(\sum\limits_{k=1}^\infty |x_k|^p\Big)^{1\over p}$.
 Given arbitrary $1\leq p_1<p_2<\infty$, it is well known (see e.g.  \cite[p. 30]{Ding}, \cite[p. 28]{Hardy}) that $l^{p_1}$ is a proper subset of $l^{p_2}$, and  \begin{eqnarray}\label{inequality}\|x\|_{p_2}\leq\|x\|_{p_1} \end{eqnarray}
 for all $x\in l^{p_1}$. Hence $\|\cdot\|_{p_2}$, another norm on $l^{p_1}$,  is strictly weaker than $\|\cdot\|_{p_1}$.

\begin{thm}\label{prop3.1}
Assume $q\geq p$ and $q$ is an even integer. Then
$
G(x):=||x||_{q}^{q}
$
 is a $C^1$ function on $(l^p,\|\cdot\|_p)$.
 \end{thm}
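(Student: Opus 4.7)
The plan is to exploit the fact that $q$ is an even integer, so $G(x)=\sum_{k=1}^\infty x_k^q$ without absolute values, making $G$ formally a polynomial of degree $q$ in each coordinate. The natural candidate for the Fr\'{e}chet derivative at $x\in l^p$ is
\[L_x h=q\sum_{k=1}^\infty x_k^{q-1}h_k,\]
and I would proceed in three steps: first verify that $L_x\in(l^p)^*$, then show that $G(x+h)-G(x)-L_x h=o(\|h\|_p)$, and finally show that $x\mapsto L_x$ is continuous from $l^p$ into $(l^p)^*$.

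For boundedness of $L_x$, H\"older with the conjugate pair $(q/(q-1),q)$ gives $|L_x h|\leq q\|x\|_q^{q-1}\|h\|_q$, and the embedding $l^p\hookrightarrow l^q$ provided by (\ref{inequality}) upgrades this to $|L_x h|\leq q\|x\|_p^{q-1}\|h\|_p$, so in particular $\|L_x\|_{(l^p)^*}\leq q\|x\|_p^{q-1}$.

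For differentiability, I would rely on the one-dimensional Taylor estimate
\[\bigl|(x_k+h_k)^q-x_k^q-qx_k^{q-1}h_k\bigr|\leq\binom{q}{2}(|x_k|+|h_k|)^{q-2}h_k^2.\]
The case $q=2$ collapses to the trivial bound $\sum h_k^2=\|h\|_2^2\leq\|h\|_p^2$. For even $q\geq 4$, I would apply $(|x_k|+|h_k|)^{q-2}\leq 2^{q-3}(|x_k|^{q-2}+|h_k|^{q-2})$, then bound the cross sum $\sum h_k^2|x_k|^{q-2}$ via H\"older with exponents $q/2$ and $q/(q-2)$ to obtain $\|h\|_q^2\|x\|_q^{q-2}$, while $\sum|h_k|^q=\|h\|_q^q$ trivially. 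Passing back to $l^p$ via (\ref{inequality}) yields a remainder of order $(\|x\|_p^{q-2}+\|h\|_p^{q-2})\|h\|_p^2=o(\|h\|_p)$.

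For continuity of $x\mapsto L_x$, I would combine $|L_x h-L_y h|\leq q\sum|x_k^{q-1}-y_k^{q-1}||h_k|$ with the mean-value estimate $|x_k^{q-1}-y_k^{q-1}|\leq(q-1)(|x_k|+|y_k|)^{q-2}|x_k-y_k|$, and then apply a three-factor H\"older with exponents $q,q,q/(q-2)$ (whose reciprocals sum to one) to reach $\|L_x-L_y\|_{(l^p)^*}\leq q(q-1)(\|x\|_p+\|y\|_p)^{q-2}\|x-y\|_p$, which forces $L_y\to L_x$ as $y\to x$ in $l^p$. The main obstacle I anticipate is that the natural H\"older exponents derived purely from the $l^p$ structure only close when $q=p$; the uniform resolution used throughout is to perform every H\"older estimate with exponents depending only on $q$ and then invoke the embedding $\|\cdot\|_q\leq\|\cdot\|_p$ to return to $l^p$-norms.
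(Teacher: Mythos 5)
Your proposal is correct and follows essentially the same route as the paper: the same candidate derivative $J_G(x)h=q\sum_k x_k^{q-1}h_k$, the same use of H\"older's inequality in $l^q$ followed by the embedding $\|\cdot\|_q\leq\|\cdot\|_p$ from (\ref{inequality}), and the same local-Lipschitz estimate for $x\mapsto J_G(x)$. The only (harmless) difference is that you control the remainders via one-dimensional Taylor and mean-value bounds plus convexity, whereas the paper expands $(x_k+h_k)^q$ and $(x_k+z_k)^{q-1}$ binomially and estimates each term with (generalized) H\"older.
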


 \begin{proof}
Note $q$ is an even integer, so for any $x,h\in l^p$,
 \begin{align*}
 G(x+h)-G(x)&=||x+h||_{q}^{q}-||x||_{q}^{q}\\
 &=\sum_{k=1}^\infty \Big[(x_k+h_k)^{q}-x_k^{q}\Big]\\
 &=\sum_{k=1}^\infty \sum_{j=1}^{q} {q\choose j} h_k^j x_k^{q-j}\\
 &= q \sum_{k=1}^\infty h_k x_k^{q-1}+\sum_{j=2}^{q}\sum_{k=1}^\infty {q\choose j} h_k^j x_k^{q-j}\\
 &=: I+II.
 \end{align*}
 We will estimate $I$ and $II$ in the following.
 For $I$,
 by H\"{o}lder's inequality ($\frac{1}{q}+\frac{1}{q'}=1$, $q'=\frac{q}{q-1}$ is the conjugate index of $q$) and (\ref{inequality})
 ($p_1=p\leq q=p_2$), we have
 \begin{equation*}
 \Bigg|\sum_{k=1}^\infty h_k x_k^{q-1}\Bigg|\leq ||h||_{q}\cdot ||x||_{q}^{q-1}\leq ||h||_p \cdot ||x||_p^{q-1},
 \end{equation*}
 so
 \[h\in l^p\mapsto q \sum_{k=1}^\infty h_k x_k^{q-1}\in\mathbb{R}\] is a bounded linear functional on $(l^p,\|\cdot\|_p)$.
 Similarly, for $II$,
\[
  \Bigg|\sum_{j=2}^{q}\sum_{k=1}^\infty {q\choose j} h_k^j x_k^{q-j}\Bigg|\leq
  \sum_{j=2}^{q} {q\choose j} ||h||^j_q ||x||_q^{q-j}
  \leq \sum_{j=2}^{q} {q\choose j} ||h||^j_p ||x||_p^{q-j}.
\]
  Therefore, $G$ is differentiable at $x\in l^p$ with derivative given by
  \begin{eqnarray*}
  J_G(x)h=q \sum_{k=1}^\infty h_k x_k^{q-1}\ \ \ (h\in l^2).
  \end{eqnarray*}
  Next, we will show that $x\mapsto J_G(x)$ is a continuous map from $(l^p,\|\cdot\|_p)$ to its dual space. For any $x,z,h\in l^p$,
  \begin{align*}
  J_G(x+z)h-J_G(x)h&= q \sum_{k=1}^\infty h_k \Big[(x_k+z_k)^{q-1}-x_k^{q-1}\Big]\\
  &= q\sum_{i=1}^{q-1}\sum_{k=1}^\infty h_k {q-1\choose {i}} z_k^ix_k^{q-1-i}\\
  &=q\sum_{k=1}^{\infty}h_kz_k^{q-1}+q\sum_{i=1}^{q-2} {q-1\choose {i}}\sum_{k=1}^\infty h_k z_k^ix_k^{q-1-i}\\
  &=: III+ IV.
  \end{align*}
 Similar to the method of estimating $I$ and $II$, we have for $III$ and $IV$ (applying generalized H$\ddot{\rm o}$lder's inequality to $IV$ with
 $\frac{1}{q}+\frac{i}{q}+\frac{q-1-i}{q}=1$ for each individual term),
\begin{align*}
\Bigg|\sum_{k=1}^\infty h_kz_k^{q-1}\Bigg|&\leq||h||_q\cdot ||z||_q^{q-1}\leq  ||h||_p\cdot ||z||_p^{q-1},\\
\Bigg|\sum_{k=1}^\infty h_kz_k^ix_k^{q-1-i}\Bigg|&\leq||h||_q\cdot ||z||_q^i \cdot ||x||_q^{q-1-i}\leq ||h||_p\cdot ||z||_p^i \cdot ||x||_p^{q-1-i}\ \ \ (1\leq i\leq q-2).
\end{align*}
Consequently,
\begin{equation*}
||J_G(x+z)-J_G(x)||\leq q ||z||_p^{q-1}+q \sum_{i=1}^{q-2} {q-1\choose {i}} ||z||_p^i  ||x||_p^{q-1-i}.
\end{equation*}
Hence, $x\mapsto J_G(x)$ is a continuous map from $(l^p,\|\cdot\|_p)$ to its dual space. To conclude, $G$ is a $C^1$ function on $(l^p,\|\cdot\|_p)$.
This finishes the proof.
 \end{proof}

Similar to the discussions in Sections \ref{section2.1} and \ref{section3} as well as considering Remark \ref{remarkA}
and Theorem \ref{prop3.1}, it is not difficult to show that
\[F:x\mapsto \exp(-\frac{1}{\|x\|_q^q})\cdot x \]
on $(l^p,\|\cdot\|_p)$
is a non-open $C^1$ map with a unique critical point at the origin
as long as $q>p$ (so that $\|\cdot\|_q$ is strictly weaker than $\|\cdot\|_p$ on $l^p$) is an even integer.
In particular, the solution $h\in l^p$ to $J_{F}(x)h=y$, $y\in l^p$, is uniquely given by (see also Remark \ref{remarkJF})
\begin{align*}
h
&=\exp({1\over{\|x\|_q^q}})\cdot \Bigg[y-{{{q\over{\|x\|_q^{2q}}}\cdot \sum\limits_{k=1}^{\infty}x_k^{q-1}y_k}\over {1+{q\over{\|x\|_q^{q}}}}}
\cdot x\Bigg]\\
&=\exp({1\over{\|x\|_q^q}})\cdot \Bigg[y-{{{1\over{\|x\|_q^{2q}}}\cdot J_{\|\cdot\|_q^q}(x)y}\over {1+{1\over{\|x\|_q^{2q}}}}\cdot J_{\|\cdot\|_q^q}(x)x}
\cdot x\Bigg].\end{align*}
Hence we have provided a second counterexample to Saint Raymond's question.

\begin{rmk}
We remark that the assumption of $q$ being an even integer in Theorem \ref{prop3.1}
can be replaced with $q\geq3$ being an odd integer. In the following we only establish the special case of $q=5$,
and leave the general situation
as an exercise. Note $t\mapsto|t|^5$
is a $C^1$ function on $\mathbb{R}$ with derivative $5|t|t^3$ (see also  \cite[\S 2.6]{Coleman}, \cite[p. 264]{Di} or \cite[\S 2.6]{Lieb})\footnote{Lieb and Loss \cite[\S 2.6]{Lieb} would probably write this derivative as $5|t|^3t$, while we prefer $5|t|t^3$.}. Thus for any $x,h\in l^5$,
\begin{align*}
\|x+h\|_5^5-\|x\|_5^5=&\sum_{k=1}^{\infty}\Big[|x_k+h_k|^5-|x_k|^5\Big]\\
=&\sum_{k=1}^{\infty}\int_0^15|x_k+th_k|(x_k+th_k)^3h_kdt\\
=&\sum_{k=1}^{\infty}5|x_k|x_k^3h_k+\sum_{k=1}^{\infty}\int_0^15|x_k|\Big[(x_k+th_k)^3-x_k^3\Big]h_kdt\\
& +\sum_{k=1}^{\infty}\int_0^15\Big[|x_k+th_k|-|x_k|\Big](x_k+th_k)^3h_kdt\\
=: & \sum_{k=1}^{\infty}5|x_k|x_k^3h_k+A+B.
\end{align*}
The main term, as a map of $h$, is a linear functional on  $(l^5,\|\cdot\|_5)$ with operator norm  $5\|x\|_5^4$.
For the first remainder, it follows from H\"{o}lder's inequality that
\[|A|\leq5\sum_{k=1}^{\infty}\Big[3|x_k|^3h_k^2+3x_k^2|h_k|^3+|x_k|h_k^4\Big]\leq 5\Big[3\|x\|_5^3\|h\|_5^2+
3\|x\|_5^2\|h\|_5^3+\|x\|_5\|h\|_5^4\Big].\]
For the second remainder, it follows from H\"{o}lder's and Minkowski's inequalities that
\[|B|\leq5\sum_{k=1}^{\infty}(|x_k|+|h_k|)^3h_k^2\leq 5(\|x\|_5+\|h\|_5)^3\|h\|_5^2.\]
Therefore, $G:x\mapsto\|x\|_5^5$ is  everywhere differentiable  on $(l^5,\|\cdot\|_5)$ with derivative given by the main term.
For any $x,y,h\in l^5$,
\begin{align*}
(J_G(x)-J_G(y))h&=5\sum_{k=1}^{\infty}\Big[|x_k|x_k^3-|y_k|y_k^3\Big]h_k\\
&=5\sum_{k=1}^{\infty}\Big[|x_k|x_k^3-|y_k|x_k^3\Big]h_k+5\sum_{k=1}^{\infty}\Big[|y_k|x_k^3-|y_k|y_k^3\Big]h_k\\
&=5\sum_{k=1}^{\infty}\Big[|x_k|-|y_k|\Big]x_k^3h_k+5\sum_{k=1}^{\infty}\big(x_k-y_k\big)\cdot\big(x_k^2+x_ky_k+y_k^2\big)\cdot|y_k|h_k,
\end{align*}
which, followed by generalized H\"{o}lder's inequality, implies that
\[\|J_G(x)-J_G(y)\|\leq5\|x-y\|_5\|x\|_5^3+5\|x-y\|_5(\|x\|_5^2+\|x\|_5\|y\|_5+\|y\|_5^2)\|y\|_5.\]
Consequently, $G$ is a $C^1$ function  on $(l^5,\|\cdot\|_5)$. Finally,
one can deduce from (\ref{inequality}) that $G$ is also a $C^1$ function  on $(l^p,\|\cdot\|_p)$ whenever $p\in[1,5]$.
\end{rmk}

\section{Third example}\label{3rd}

Consider  \[l^{\infty}=\Big\{x=(x_1,x_2,\cdots)\in\mathbb{R}^\mathbb{N}: \sup_k|x_k|<\infty\Big\}\]  with uniform
norm $\|x\|_{\infty}:=\sup\limits_k|x_k|$.
Obviously, \[|||x|||=\Big(\sum\limits_{k=1}^\infty {{x_k^2} \over {2^k}}\Big)^{1\over 2}\]
is another norm on $l^{\infty}$ that is strictly weaker than $\|\cdot\|_{\infty}$. For any $x,h\in l^{\infty}$,
\begin{align*}|||x+h|||^2-|||x|||^2&=\sum_{k=1}^{\infty}\frac{x_k}{2^{k-1}}h_k+\sum_{k=1}^{\infty}\frac{h_k^2}{2^k}\\
&=: V+VI.\end{align*}
Note that  \[J(x): h\mapsto\sum_{k=1}^{\infty}\frac{x_k}{2^{k-1}}h_k\]
is a bounded linear functional on $(l^{\infty},\|\cdot\|_{\infty})$ with operator norm
\[\sup\limits_{||h||_{\infty}=1} |J(x)h|=\sum_{k=1}^{\infty}\frac{|x_k|}{2^{k-1}}\leq 2\|x\|_{\infty},\] and \[|VI|\leq\sum_{k=1}^{\infty}\frac{\|h\|^2_{\infty}}{2^k}=\|h\|^2_{\infty}.\] Thus $x\mapsto|||x|||^2$ is an everywhere differentiable function on
$(l^{\infty},\|\cdot\|_{\infty})$ with derivative at $x$ given by $J(x)$.
For any $x,z\in l^{\infty}$,
\[\|J(x)-J(z)\|=\sup\limits_{||h||_{\infty}=1} |(J(x)-J(z))h|=\sup\limits_{||h||_{\infty}=1} |J(x-z)h|\leq 2\|x-z\|_{\infty},\]
which implies that $x\mapsto|||x|||^2$ is a $C^1$ function on
$(l^{\infty},\|\cdot\|_{\infty})$.
Similar to the discussions in Sections \ref{section2.1} and \ref{section3} as well as considering
 Remark \ref{remarkA}, it is not difficult to show that
\begin{equation}\label{uniform}x\mapsto \exp\Bigg(-\frac{1}{\sum\limits_{k=1}^\infty {{x_k^2} \over {2^k}}}\Bigg)\cdot x \end{equation}
on $(l^{\infty},\|\cdot\|_{\infty})$
is a non-open $C^1$ map with a unique critical point at the origin.
Hence we have provided a third counterexample to Saint Raymond's question.

\begin{rmk}\label{c0} The closed subset
\[c_0=\Big\{x=(x_1,x_2,\cdots)\in\mathbb{R}^\mathbb{N}: \lim_{k\rightarrow\infty}x_k=0\Big\}\]
of $(l^{\infty},\|\cdot\|_{\infty})$, endowed with induced norm, is a Banach space. Similar to the above example, one can show that
the map defined by
(\ref{uniform}) on $c_0$ is a non-open $C^1$ map with a unique critical point at the origin.
\end{rmk}

\section{Fourth example}
Let $(\Omega, {\cal F}, P)$ be a probability space, and denote for any $p\in[1,\infty]$,
\[L^p=L^p(\Omega, {\cal F}, P):=\{f\ \mbox{real-valued measurable on}\ \Omega: \int_\Omega |f|^p dP<\infty\}\ \ \ (1\leq p<\infty)\]
with norm $||f||_{p}:=(\int_\Omega |f|^p dP)^{1\over p}$, and
\[L^{\infty}=L^{\infty}(\Omega, {\cal F}, P):=\{f\ \mbox{essentially bounded real-valued measurable on}\ \Omega\}\]
with standard uniform norm $\|\cdot\|_{\infty}$.
According to H\"{o}lder's inequality,
\begin{equation}\label{comparison}\|f\|_{p_1}\leq\|f\|_{p_2} \ \ \ (1\leq p_1<p_2\leq\infty),\end{equation}
thus $L^{p_2}$  is a linear subspace of $L^{p_1}$.

Now let $p\in(2,\infty]$ be fixed. We  assume that  $L^{p}$ is a proper subspace of $L^2$ so that some trivial exceptions
 can be
excluded. This condition means that $\|\cdot\|_2$,
another norm on $L^p$, is strictly weaker than $\|\cdot\|_p$.

For any $f\in L^p$, define
\[G(f)=\int_\Omega f^2 dP.\]
Then for any $f, h\in L^p$,
\[G(f+h)-G(f)=2\int_\Omega fh dP+\int_\Omega h^2 dP.\]
 Let $p'$ denote the conjugate index of $p$, and note $p'<2<p$. By H\"{o}lder's inequality and (\ref{comparison}),
\[\Big|\int_\Omega fh dP\big|\leq ||f||_{p}\cdot ||h||_{p'}\leq ||f||_{p}\cdot ||h||_{p},
\]
so $$h\in L^p\mapsto 2\int_\Omega fh dP$$ is a bounded linear functional on $(L^p,\|\cdot\|_p)$ with operator norm bounded above by $2\|f\|_p$.
On the other hand, by (\ref{comparison}), we have
\[\Big|\int_\Omega h^2 dP\Big|\leq ||h||_{p} ^2.\]
Therefore, $G$ is everywhere differentiable on $(L^p,\|\cdot\|_p)$ with derivative at $f\in L^p$  given by
\[J_G(f)h=2\int_\Omega fh dP\ \ \ (h\in L^p).\]
 For any $f, g\in L^p$, \[\|J_G(f)-J_G(g)\|=\sup_{\|h\|_p=1}|(J_G(f)-J_G(g))h|=
 \sup_{\|h\|_p=1}|J_G(f-g)h|\leq2\|f-g\|_p,\]
which implies that
 $G$ is a $C^1$ function on $(L^p,\|\cdot\|_p)$.
 Similar to the discussions in Sections \ref{section2.1} and \ref{section3} as well as considering
  Remark \ref{remarkA}, it is not difficult to show
that
\begin{equation}\label{uniform22}f\mapsto \exp\Big(-\frac{1}{\int_{\Omega}f^2dP}\Big)\cdot f\end{equation}
on $(L^{p},\|\cdot\|_{p})$
is a non-open $C^1$ map with a unique critical point at the origin. So we have provided a fourth counterexample to Saint Raymond's question.

\begin{rmk}\label{cK}
Let $P$ be the Lebesgue measure
on the $\sigma$-field ${\cal F}$ of  Borel measurable subsets of $\Omega:=[0,1]$.
The family $C(\Omega)$ of all real-valued continuous functions on $\Omega$ is a closed subset
 of $L^{\infty}(\Omega, {\cal F}, P)$, hence endowed with induced uniform norm, it is a Banach space.
 Similar to the above example, one can show that the map defined by
(\ref{uniform22})
on $C(\Omega)$ is a non-open $C^1$ map with a unique critical point at the origin.
\end{rmk}


\section{Weakly separable spaces}

Having studied various examples in the previous four sections, we are now able to deal with
some general Banach spaces by introducing a concept called \emph{weakly separable space}.

\begin{defi}
An infinite dimensional real Banach space $X=(X,\|\cdot\|)$
is said to be weakly separable if there exists a sequence of continuous linear functions $\{l_k\}_{k=1}^{\infty}$ on
$X$ such that
\begin{equation}\label{aleph}x=0\  \mbox{in}\ X \Longleftrightarrow l_k(x)=0\  \mbox{for all}\ k\in\mathbb{N}.
\end{equation}
\end{defi}

\begin{rmk} \label{rmk72}
(\ref{aleph}) means $x=y$ in $X$ if and only if $l_k(x)=l_k(y)$ for all $k\in\mathbb{N}$, or equivalently
\[\bigcap_{k=1}^{\infty}\big\{x\in X:l_k(x)=0\big\}=\{0\}.\]
We further remark that it is of no harm to  assume $\|l_k\|_{X^{\ast}}=1$ for all $k\in\mathbb{N}$ in (\ref{aleph}).
\end{rmk}

\begin{exmp}
Let $p\in[1,\infty]$, and define $l_k(x)=x_k$ for $x=(x_1,x_2,\cdots)\in l^p$ and $k\in\mathbb{N}$. Then it is  easy to verify that
$(l^p,\|\cdot\|_p)$ is weakly separable. In exactly the same way, one can show that $c_0$ (see Remark \ref{c0}) is also weakly separable.
\end{exmp}

\begin{exmp} Let $p\in[1,\infty]$, and let $L^p(\mathbb{R}^n)$ denote the $p$-times real-valued Lebesgue integrable  functions
on $\mathbb{R}^n$ with standard $p$-norm. To be clear, if $p=\infty$ then
$p$-times and $p$-norm are understood as essentially bounded and uniform norm, respectively.
We claim that $L^p(\mathbb{R}^n)$ is weakly separable. To prove this claim, we
first pick  all real-valued  polynomials $\{g_k\}_{k=1}^{\infty}$ with rational coefficients  on $[0,1]^n$, then define
\[l_{m_1,\ldots,m_n,k}:f\mapsto \int_{[m_1,m_1+1]\times\cdots\times[m_n,m_n+1]}f(y_1,\ldots,y_n)g_k(y_1-m_1,\ldots,y_n-m_n)dy_1\cdots dy_n\]
on $L^p(\mathbb{R}^n)$ for all $(m_1,\ldots,m_n,k)\in\mathbb{Z}^n\times\mathbb{N}$.
If $l_{m_1,\ldots,m_n,k}(f)=0$ for all $k\in\mathbb{N}$ and some fixed $(m_1,\ldots,m_n)\in\mathbb{Z}^n$, then
one can suitably apply the Stone-Weierstrass theorem \cite{Bollobas,lax} to get
\[\int_{[m_1,m_1+1]\times\cdots\times[m_n,m_n+1]}f(y_1,\ldots,y_n)h(y_1,\ldots,y_n)dy_1\cdots dy_n=0\]
for all real-valued continuous functions $h$ on $[m_1+1,m_1]\times\cdots\times[m_n,m_n+1]$,  which
 implies that $f$ vanishes almost everywhere on the same $n$-cube. This suffices to establish the claim.
\end{exmp}

\begin{exmp} In accordance with Remarks \ref{c0} and \ref{cK}, it is straightforward to show  that
any infinite dimensional closed subspace of a weakly separable Banach space is also weakly separable (see also Remark \ref{rmk72}).
\end{exmp}

\begin{thm}
Any infinite dimensional separable real Banach space is weakly separable.
\end{thm}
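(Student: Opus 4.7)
The plan is to use the Hahn--Banach theorem together with separability. First I would fix a countable dense subset $\{x_n\}_{n=1}^{\infty}$ of $X\setminus\{0\}$; such a sequence exists since $X$ is separable and removing a single point from a separable metric space leaves it separable (if $X$ itself is infinite dimensional we can simply take any countable dense set and discard the origin if it appears).

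Next I would invoke the Hahn--Banach theorem in its standard norming form: for each $n\in\mathbb{N}$ there exists a continuous linear functional $l_n$ on $X$ with $\|l_n\|_{X^{\ast}}=1$ and $l_n(x_n)=\|x_n\|$. I claim this sequence $\{l_n\}_{n=1}^{\infty}$ witnesses weak separability. Clearly if $x=0$ then $l_k(x)=0$ for every $k$. Conversely, suppose $x\in X$ with $x\neq 0$. By density, choose $n$ with $\|x-x_n\|<\|x\|/2$; then by the reverse triangle inequality $\|x_n\|>\|x\|/2>\|x-x_n\|$, and consequently
\[|l_n(x)|\geq |l_n(x_n)|-|l_n(x_n-x)|\geq \|x_n\|-\|x_n-x\|>0,\]
so some $l_n$ does not vanish at $x$. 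This gives the required biconditional (\ref{aleph}), and because $\|l_n\|_{X^{\ast}}=1$ for all $n$, the additional normalization mentioned in Remark \ref{rmk72} is automatically met.

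The only genuinely substantial ingredient is Hahn--Banach (specifically its corollary that for every nonzero $y\in X$ there is a norm-one functional attaining $\|y\|$ at $y$), and separability is used only to reduce the task of separating all points of $X$ to separating the points of a countable dense set. I expect no serious obstacle; the main thing to be careful about is the density argument above, where the inequality $\|x-x_n\|<\|x\|/2$ must be chosen tight enough to force $\|x_n\|>\|x-x_n\|$ so that the triangle-inequality estimate on $|l_n(x)|$ is strictly positive.
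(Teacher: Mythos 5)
Your proposal is correct and follows essentially the same route as the paper: a countable dense set of non-zero vectors, Hahn--Banach norming functionals $l_n$ with $l_n(x_n)=\|x_n\|$ and $\|l_n\|_{X^{\ast}}=1$, and a triangle-inequality estimate showing some $l_n(x)\neq 0$ for each $x\neq 0$. The only cosmetic difference is the constant ($\|x-x_n\|<\|x\|/2$ versus the paper's $\leq\|x\|/4$), and both choices make the estimate work.
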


\begin{proof}
Let $\{x_k\}_{k=1}^{\infty}$ be a sequence of non-zero elements of an infinite dimensional separable real Banach space
$(X,\|\cdot\|)$ such that its closure is $X$. Applying the Hahn-Banach extension theorem
to $\lambda x_k\mapsto\lambda\|x_k\|$ $(\lambda\in\mathbb{R})$ on the one-dimensional linear subspace spanned by $x_k$ for each $k$,
we see that there exists a sequence of continuous linear functionals $\{l_k\in X^{\ast}\}_{k=1}^{\infty}$ with
$\|l_k\|_{X^{\ast}}=1$, such that $l_k(x_k)=\|x_k\|$ for all $k$.
Then for any $x\neq0$, by choosing a positive integer $j$
such that $\|x_{j}-x\|\leq\frac{\|x\|}{4}$, one gets
\[l_{j}(x)=l_{j}(x_{j})-l_{j}(x_{j}-x)\geq\|x_{j}\|-\|x_{j}-x\|
\geq\|x\|-2\|x_{j}-x\|\geq\|x\|-\frac{\|x\|}{2}=\frac{\|x\|}{2}>0.\]
Thus (\ref{aleph}) holds for $\{l_k\in X^{\ast}\}_{k=1}^{\infty}$. This proves that $(X,\|\cdot\|)$ is weakly separable.
\end{proof}

\begin{thm}\label{thm77}
The dual space of an infinite dimensional separable real Banach space is weakly separable.
\end{thm}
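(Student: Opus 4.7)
The plan is to exploit the canonical embedding $X \hookrightarrow X^{\ast\ast}$ to produce, from a dense sequence in $X$, a countable family of continuous linear functionals on $X^{\ast}$ whose common zero set is $\{0\}$. Concretely, let $(X,\|\cdot\|)$ be an infinite dimensional separable real Banach space, and choose a sequence $\{x_k\}_{k=1}^{\infty}$ of nonzero elements of $X$ whose closure is $X$ (such a sequence exists because $X$ is separable and infinite dimensional). For each $k\in\mathbb{N}$, I would define
\[L_k : l \in X^{\ast} \longmapsto l(x_k) \in \mathbb{R}.\]
Linearity is immediate and the estimate $|L_k(l)|\leq\|l\|_{X^{\ast}}\|x_k\|$ shows that $L_k\in X^{\ast\ast}$ with $\|L_k\|_{X^{\ast\ast}}\leq\|x_k\|$; the Hahn-Banach theorem in fact yields equality, so after dividing by $\|x_k\|$ one can normalise to $\|L_k\|_{X^{\ast\ast}}=1$, in accordance with Remark \ref{rmk72}.

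Next I would verify (\ref{aleph}) for this family. Suppose $l\in X^{\ast}$ satisfies $L_k(l)=0$ for every $k\in\mathbb{N}$; then $l(x_k)=0$ for every $k$. Given an arbitrary $x\in X$, pick a subsequence $x_{k_j}\to x$ in norm. Since $l$ is continuous on $X$, one has $l(x)=\lim_j l(x_{k_j})=0$, hence $l\equiv 0$ on $X$. Conversely, $l=0$ obviously forces $L_k(l)=0$ for all $k$. This establishes the required biconditional and shows that $X^{\ast}$ is weakly separable.

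There is essentially no obstacle here: the entire argument amounts to observing that evaluation at a dense set separates continuous linear functionals. The only thing to be slightly careful about is requiring $x_k\neq 0$ (so that the normalisation $L_k/\|x_k\|$ makes sense), which is harmless because we can discard the zero term from any dense sequence without losing density in an infinite dimensional space.
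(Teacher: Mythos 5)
Your proposal is correct and follows essentially the same route as the paper: take a dense sequence $\{x_k\}$ of nonzero elements of $X$, use the evaluation functionals $l\mapsto l(x_k)$ on $X^{\ast}$, and conclude from continuity of $l$ and density of $\{x_k\}$ that the common kernel is $\{0\}$. The normalisation via Hahn--Banach is a harmless extra touch consistent with Remark \ref{rmk72}.
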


\begin{proof}
Let $\{x_k\}_{k=1}^{\infty}$ be a sequence of non-zero elements of an infinite dimensional separable real Banach space
$(X,\|\cdot\|)$ such that its closure is $X$. For each $k\in\mathbb{N}$,
\[l_k:f\in X^{\ast}\mapsto f(x_k)\in\mathbb{R}\]
is a continuous linear functional on $X^{\ast}$. Suppose $l_k(f)=0$ for some $f\in X^{\ast}$ and all $k\in\mathbb{N}$, or equivalently
$f(x_k)=0$ for all  $k\in\mathbb{N}$. Since $f$ is continuous on $X$ and $\{x_k\}_{k=1}^{\infty}$ is dense in $X$, one immediately gets
$f=0$ in $X^{\ast}$.  This proves that $X^{\ast}$ is weakly separable.
\end{proof}

\begin{rmk}
Theorem \ref{thm77} includes $l^{\infty}$ and $L^{\infty}(\mathbb{R}^n)$ as typical examples, so weakly separable Banach spaces
are not necessarily separable.
\end{rmk}

Our next result is in essence the same as the example given in Section \ref{3rd}.

\begin{thm}\label{thm79} Let $(X,\|\cdot\|)$ be a weakly separable Banach space such that (\ref{aleph}) holds for some  sequence $\{l_k\in X^{\ast}\}_{k=1}^{\infty}$ with $\|l_k\|_{X^{\ast}}=1$ for all $k\in\mathbb{N}$.  Then
\[x\mapsto \exp\Bigg(-\frac{1}{\sum\limits_{k=1}^{\infty}\frac{l_k(x)^2}{2^k}}\Bigg)\cdot x\]
on $(X,\|\cdot\|)$ is a non-open $C^1$ map with a unique critical point at the origin.
\end{thm}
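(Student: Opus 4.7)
My plan is to adapt the argument of Section \ref{3rd} almost verbatim, with the coordinates $x_k$ replaced by $l_k(x)$. The four tasks are: (i) verify that $|||x||| := \bigl(\sum_{k=1}^{\infty} l_k(x)^2/2^k\bigr)^{1/2}$ is a norm on $X$ strictly weaker than $\|\cdot\|$; (ii) show that $x\mapsto|||x|||^{2}$ is a $C^1$ function on $(X,\|\cdot\|)$; (iii) conclude that $F$ is $C^1$ and compute $J_F(x)$; (iv) verify that $0$ is the unique critical point, and invoke Theorem \ref{thm22} to obtain non-openness.

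For (i), the bound $|l_k(x)|\le\|l_k\|_{X^{\ast}}\|x\|=\|x\|$ together with $\sum 2^{-k}=1$ gives $|||x|||\le\|x\|$; non-degeneracy is exactly condition \eqref{aleph}, homogeneity is obvious, and the triangle inequality follows by viewing $x\mapsto(l_k(x)/\sqrt{2^k})_k$ as a bounded linear map from $X$ into $\ell^2$ whose $\ell^2$-norm equals $|||x|||$. The subtle point, and what I expect to be the main obstacle, is proving strict weakness in this generality: for each $N\in\mathbb{N}$, the subspace $Y_N:=\bigcap_{k=1}^{N}\ker(l_k)$ has finite codimension in $X$, hence is infinite-dimensional, so I may choose $y_N\in Y_N$ with $\|y_N\|=1$; then $|||y_N|||^2=\sum_{k>N}l_k(y_N)^2/2^k\le 2^{-N}\to 0$, which shows that $\|\cdot\|$ and $|||\cdot|||$ cannot be equivalent.

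For (ii), expanding as in Section \ref{3rd},
\[|||x+h|||^{2}-|||x|||^{2}=2\sum_{k=1}^{\infty}\frac{l_k(x)l_k(h)}{2^{k}}+\sum_{k=1}^{\infty}\frac{l_k(h)^{2}}{2^{k}}.\]
The first sum defines a linear functional $J(x)$ in $h$ with $\|J(x)\|_{X^{\ast}}\le 2\|x\|$ (using $|l_k(x)|\le\|x\|$, $|l_k(h)|\le\|h\|$, and $\sum 2^{-k}=1$), while the second is $\le\|h\|^{2}$, hence $o(\|h\|)$. Thus $|||\cdot|||^{2}$ is everywhere differentiable with derivative $J$, and $\|J(x)-J(z)\|=\|J(x-z)\|\le 2\|x-z\|$ yields Lipschitz (hence continuous) dependence, so $|||\cdot|||^{2}$ is $C^1$. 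Since $t\mapsto e^{-1/t}$ extends smoothly to $[0,\infty)$ with value $0$ at $t=0$, the reasoning of Remark \ref{remarkA} promotes $x\mapsto\exp(-1/|||x|||^{2})$ to a $C^1$ function on all of $X$, and the product rule then shows $F$ is $C^1$.

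For (iii), Section \ref{section2.1} already gives $J_F(0)=0$, so the origin is a critical point. For $x\ne 0$, I would copy the computation at the end of Section \ref{section3}: using the Euler-type identity $J_{|||\cdot|||^{2}}(x)x=2|||x|||^{2}$ (checked directly from the formula for $J$), the equation $J_F(x)h=y$ reduces, after writing $h=\exp(1/|||x|||^{2})\,y+\gamma x$, to the scalar equation
\[\gamma\Bigl(1+\tfrac{2}{|||x|||^{2}}\Bigr)=-\tfrac{1}{|||x|||^{4}}\cdot J_{|||\cdot|||^{2}}(x)\bigl(\exp(1/|||x|||^{2})\,y\bigr),\]
whose left-hand coefficient is strictly positive; this yields a unique $\gamma$, hence a unique $h$, so $J_F(x)$ is a bijection in $B(X)$ and Banach's isomorphism theorem gives $J_F(x)^{-1}\in B(X)$. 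Combining the strict weakness from step (i) with Theorem \ref{thm22} then shows that $F$ is not open, completing the proof.
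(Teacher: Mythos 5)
Your proposal is correct and follows essentially the same route as the paper's proof: the same strict-weakness argument via nonzero elements of the finite-codimension subspaces $\bigcap_{k\le N}\ker(l_k)$, the same expansion of $|||x+h|||^{2}-|||x|||^{2}$ yielding a Lipschitz-continuous derivative, and the same reduction of $J_F(x)h=y$ to a scalar equation for $\gamma$ followed by Banach's isomorphism theorem and Theorem \ref{thm22}. Nothing essential is missing.
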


\begin{proof} Since $\|l_k\|_{X^{\ast}}=1$ for all $k\in\mathbb{N}$,
\[|||x|||:=\Big(\sum_{k=1}^{\infty}\frac{l_k(x)^2}{2^k}\Big)^{\frac{1}{2}}\]
is another norm on $X$ and $|||x|||\leq\|x\|$ for all $x\in X$. For each $q\in\mathbb{N}$,
\[X_q:=\bigcap_{k=1}^q\big\{x\in X: l_k(x)=0\big\}\]
is a closed subspace of $(X,\|\cdot\|)$ with codimension $\leq q$. Since $X$
is infinite dimensional, one can pick a non-zero element $x_q$ of $X_q$ for each $q\in\mathbb{N}$. Note then
\[|||x_q|||=\Big(\sum_{k=q+1}^{\infty}\frac{l_k(x_q)^2}{2^k}\Big)^{\frac{1}{2}}\leq\frac{\|x_q\|}{\sqrt{2^q}}\ \ \ (q\in\mathbb{N}),\]
which
implies that $|||\cdot|||$ is strictly weaker than $\|\cdot\|$ on $X$.
For any $x,h\in X$,
\[|||x+h|||^2-|||x|||^2=\sum_{k=1}^{\infty}\frac{l_k(x)l_k(h)}{2^{k-1}}+|||h|||^2,\]
from which
one can easily deduce that $G:x\mapsto|||x|||^2$
on $(X,\|\cdot\|)$ is a differentiable function with derivative at $x\in X$ given by
\[J_G(x)h=\sum_{k=1}^{\infty}\frac{l_k(x)l_k(h)}{2^{k-1}}\ \ \ (h\in X).\]
For any $x,y\in X$,
\[\|J_G(x)-J_G(y)\|=\sup_{\|h\|=1}|(J_G(x)-J_G(y))h|=
 \sup_{\|h\|=1}|J_G(x-y)h|\leq\sum_{k=1}^{\infty}\frac{\|x-y\|}{2^{k-1}}=2\|x-y\|,\]
 which implies that $G$ is a $C^1$ function on $(X,\|\cdot\|)$.
 By composition rule,
 \[F:x\mapsto \exp(-\frac{1}{G(x)})\cdot x\]
on $(X,\|\cdot\|)$ is a $C^1$ map  with derivative at $x\in X$ given by
\begin{equation}\label{jacobian}
J_F(x)h=\exp(-\frac{1}{G(x)})\cdot\frac{1}{G(x)^2}\cdot J_G(x)h\cdot x+\exp(-\frac{1}{G(x)})\cdot h\ \ \ (h\in X).\end{equation}
To be clear, the value of the function $t\in[0,\infty)\mapsto \exp(-\frac{1}{t})\frac{1}{t^2}$ at $t=0$
is understood as 0.
According to the discussions in the beginning part of Section \ref{section2.1} or (\ref{jacobian}),
the origin is a critical point of $F$. Assuming next $x$ is an arbitrary non-zero element of $X$,
one can check that the solution $h\in X$, to $J_F(x)h=y$, $y\in X$, is uniquely given by
\[
h
=\exp({1\over{G(x)}})\cdot \Bigg[y-{{{1\over{G(x)^2}}\cdot J_{G}(x)y}\over {1+{1\over{G(x)^2}}}\cdot J_{G}(x)x}
\cdot x\Bigg].\]
So by Banach's isomorphism theorem, every non-zero element of $X$ is a regular point of $F$.
Recall that the non-openness of $F$ is guaranteed by Theorem \ref{thm22}. This finishes the proof of Theorem \ref{thm79}.
\end{proof}

Considering Remark \ref{rmk72} and Theorem \ref{thm79}, we see that $C^1$ maps with isolated critical points on
 weakly separable Banach spaces are not necessarily open.

\section{Further remarks}

\begin{rmk}
In sharp contrast to the infinite dimensional scenario,
differentiable maps with isolated critical points do have nice properties in finite dimensional spaces.
Apart from the open mapping property mentioned in the Introduction,
 differentiable vector fields with isolated critical points
on Euclidean spaces are local homeomorphisms provided the dimension of the ambient space is higher than two \cite{Church62,Li2}.
\end{rmk}



\begin{question}
Our general approach depends crucially on the fact that $x\mapsto|||x|||^s$ is a non-negative real-valued  function on $(X,\|\cdot\|)$
so that $x\mapsto \exp(-|||x|||^{-s})$ plays like a ``black hole" near the origin.
 Can anyone provide a counterexample to Saint Raymond's question in a class of or some particular
infinite dimensional \textbf{complex} Banach spaces?

\end{question}

\end{document}